\documentclass[a4paper,reqno,12pt]{amsart}

\RequirePackage{doi}
\usepackage{hyperref}

\usepackage{amsmath,amsthm,amssymb}
\usepackage{mathrsfs}
\usepackage[shortlabels]{enumitem}
\usepackage{graphicx}
\usepackage[font=small]{caption}
\usepackage{xcolor}
\usepackage{url}

\newcommand{\N}{\mathbb{N}}
\newcommand{\R}{{\mathbb{R}}}

\newcommand{\C}{{\mathbb{C}}}
\newcommand{\Z}{{\mathbb{Z}}}

\newcommand{\dd}{{{\rm d}}}
\newcommand{\ii}{{\rm i}}

\newcommand{\ov}{\overline}
\newcommand\wt{\widetilde}

\newcommand{\eps}{\varepsilon}

\newcommand{\spp}{\sigma_{\rm p}}

\newcommand{\Dom}{{\operatorname{Dom}}}

\newcommand{\Rank}{{\operatorname{rank}}}
\renewcommand{\Re}{\operatorname{Re}}
\renewcommand{\Im}{\operatorname{Im}}

\newcommand{\BigO}{\mathcal{O}}

\newcommand{\lspan}{{\operatorname{span}}}

\newcommand{\Do}{\frac{\dd}{\dd x}}

\theoremstyle{plain}

\newtheorem{theorem}{Theorem}[section]
\newtheorem{lemma}[theorem]{Lemma}

\theoremstyle{definition}

\newtheorem{asm-sec}[theorem]{Assumption}

\newcommand\cD{\mathcal D}
\newcommand\cE{\mathcal E}

\newcommand\cH{\mathcal H}

\newcommand\cS{\mathcal S}

\usepackage{fullpage}

\begin{document}


\author{Boris Mityagin}

\address[Boris Mityagin]{Department of Mathematics, The Ohio State University, 231 West 18th Ave,
	Columbus, OH 43210, USA}
\email{mityagin.1@osu.edu, boris.mityagin@gmail.com}

\author{Petr Siegl}

\address[Petr Siegl]{Institute of Applied Mathematics, Graz University of Technology, Steyrergasse 30, 8010 Graz, Austria}
\email{siegl@tugraz.at}


\subjclass{34L40, 34L10, 81Q12}
\keywords{Dirac operator, distributional potential, Riesz basis, Paley-Zygmund theorem}

\date{\today}

\title{Eigensystems of Dirac operators with singular potentials}

\begin{abstract}
We perturb one-dimensional Dirac operators on a bounded interval subject to Dirichlet boundary conditions by potentials with Fourier coefficients exhibiting power-decay. As a consequence of Paley-Zygmund theorem, this broad family of potentials comprises distributions that are neither integrable functions nor measures. We localize the spectrum of the perturbed operator and, as the main result, show that its eigensystem generates a Riesz basis.
\end{abstract}

\maketitle

\section{Introduction}

Let $A$ be the Dirac operator in $\cH=L^2(0,\pi) \oplus L^2(0,\pi)$ subject to Dirichlet boundary conditions
\begin{equation}\label{A.def}
\begin{aligned}
A &= 
\ii
\begin{pmatrix}
1 & 0
\\
0 & - 1
\end{pmatrix}
\Do,
\\ 
\Dom(A) & = \{ f = (f_1,f_2) \in H^1(0,\pi) \oplus H^1(0,\pi) \, : \, f_1(0) = f_2(0), f_1(\pi) = f_2(\pi) \}.
\end{aligned}	
\end{equation}
The spectrum of $A$ is discrete, $\sigma(A) = \Z$, all eigenvalues are simple and normalized eigenfunctions can be seleted as
\begin{equation}
\psi_j = \frac{1}{\sqrt {2 \pi}} 
\begin{pmatrix}
e_{-j} \\ e_j
\end{pmatrix},
\qquad e_j(x) = \frac{1}{\sqrt \pi} e^{\ii jx}, \qquad x \in (0,\pi), \quad j \in \Z,
\end{equation}
see e.g.~\cite{Djakov-2010-283}. We denote the linear span of the eigenfunctions by
\begin{equation}\label{E.def}
 \cE = \lspan\{ \psi_j \, : \, j \in \Z\}. 
\end{equation}

The goal of this note is to show that there is a broad family of potentials
\begin{equation}\label{V.def}
V = 
\begin{pmatrix}
	0 & v
	\\
	w & 0
\end{pmatrix},
\end{equation}
where $v$ and $w$ are neither integrable functions nor measures such that the eigensystem of $T = A + V$ generates a Riesz basis (see Theorem~\ref{thm:Dirac.RB}). This appears to be the first result for distributional potentials as such a claim is not covered by the results on Riesz basis properties for perturbations by $L^2$-potentials in \cite{Djakov-2010-283} and by $L^1$-potentials in \cite{Savchuk-2014-96,Lunyov-2016-441}.

Let $v$ and $w$ be defined by their Fourier expansions (in $\cD^*$ introduced below)
\begin{equation}\label{v.w.def}
v = \sum_{m \in 2 \Z} v_m e_m, \qquad w = \sum_{m \in 2\Z} w_m e_m,
\end{equation}
where the sequences $\{v_{2m}\}, \{w_{2m}\} \subset \C$ satisfy 
\begin{equation}\label{v.w.cond.intro}
|v_{2m}| + |w_{2m}| = \BigO \left( |m|^{-\alpha} \right), \quad  |m| \to \infty,
\end{equation}
with some $\alpha>0$.

If $\alpha > 1/2$, then $v$ and $w$ are $L^2$-functions. For $\alpha \in (0,1/2]$, $v$ and $w$ are understood as Schwarz (antilinear) distributions on the Sobolev space $H^{1/2}$, i.e., as bounded  functionals
\begin{equation}
	(v,f) = \sum_{k \in 2\Z} v_k \ov{\langle f, e_k \rangle}, \qquad f \in \cD,
\end{equation}
(and similarly for $w$) on the space
\begin{equation}
\cD:=\{ f \in L^2(0,\pi) \, : \,  \{(|j|+1)^\frac12 \langle f, e_j \rangle \} \in \ell^2(2\Z) \}
\end{equation}
equipped with the norm 
\begin{equation}
\|f\|_{\cD}^2 = \sum_{j \in 2\Z} (|j|+1) |\langle f, e_j \rangle|^2, \quad f \in \cD.
\end{equation}
We view $v$ (and similarly $w$ and $V$) as a multiplication operator in the following sense
\begin{equation}\label{v.mult}
v : \lspan \{e_j \, : \, j \in \Z\} \to \cD^*: 
v e_j = \frac{1}{\sqrt \pi} \sum_{m \in 2 \Z} \tilde v_{m-j} e_m,
\end{equation}
where
\begin{equation}
\wt v_m = 
\begin{cases}
v_m, & \text{ if } m \in 2 \Z,
\\[2mm]
\frac{2\ii}{\pi} \sum_{k \in 2 \Z} \frac{v_k}{m-k}, & \text{ if } m \in 2 \Z+1,
\end{cases}
\end{equation}
see \cite[Lemma 2]{Djakov-2006-61} for details. Since the condition \eqref{v.w.cond.intro} implies that as $|m| \to \infty$
\begin{equation}\label{v.til.est}
|\tilde v_m| = 
\begin{cases}
\BigO \left( \frac{\log |m|}{ |m|^{\alpha}} \right), & 0 < \alpha \leq 1,
\\[2mm]
\BigO \left( \frac{1}{|m|} \right), & \alpha > 1,
\end{cases}
\end{equation}
see \cite[Lemma 2]{Mityagin-2016-106} or \eqref{MS.lem.HT}, \eqref{MS.lem.HT.2} below, $v e_j$, $j \in \Z$, are indeed elements of $\cD^*$.
Notice that \eqref{v.mult} is an extension of the usual multiplication operator by an $L^2$-function if $\alpha>1/2$. 

It follows that $V$ can be viewed as a form defined on $\cE$ and it satisfies
\begin{equation}\label{V.form}
	( V \psi_j, \psi_k )
	= 
	\frac{(v e_j, e_{-k}) + (w e_{-j}, e_k)} 2 
	=
	\frac{\wt v_{-(j+k)}+ \wt w_{j+k} }{2 \sqrt \pi}, \quad j,k \in \Z.
\end{equation}
Hence the ``matrix elements'' of $V$ satisfy
\begin{equation}\label{V.form.tau}
|( V \psi_j, \psi_k )| \leq \tau(j+k), \quad j, k \in \Z,
\end{equation}
where
\begin{equation}
	\tau(j) = \frac{|\wt v_{-j}| + |\wt w_j|}{2 \sqrt \pi}, \quad j \in \Z,
\end{equation}
and, by \eqref{v.til.est}, as $|j| \to \infty,$
\begin{equation}\label{tau.cond}
	\tau(j)
	= \begin{cases}
		\BigO \left( \frac{\log |j|}{ |j|^{\alpha}} \right), & 0 < \alpha \leq 1,
		\\[2mm]
		\BigO \left( \frac{1}{|j|} \right), & \alpha > 1.
		\end{cases}
\end{equation}

The condition on the perturbation $V$ in \eqref{V.form.tau} and \eqref{tau.cond} represents a decay of the matrix elements of $V$ on the diagonals. Such a structure was exploited in the analysis of Dirac operators in \cite{Djakov-2010-283} where it was assumed that $\tau \in \ell^2(\Z)$ (see also \cite{Djakov-2009-481,Djakov-2010-203} for Schr\"odinger operators). We note that \eqref{V.form.tau} and \eqref{tau.cond} is of a different type than the form local-subordination used in \cite{Mityagin-2016-106,Mityagin-2019-139,Mityagin-2025-31,MiSi-2026-I}, which represents a decay of matrix elements of $V$ on rows and columns. Namely, 
\begin{equation}\label{V.loc.sub}
	|(V \psi_j, \psi_k)| \leq \omega_j \omega_k, \quad j, k \in \Z,
\end{equation}
where $\{\omega_j\}$ decays sufficiently fast (e.g.~$\omega_j = \BigO(|j|^{-\alpha})$, $|j| \to \infty$, with $\alpha>0$). 

The Fourier coefficients of $v$ and $w$ are decaying; however, it is important to notice that, as a consequence of Paley-Zygmund theorem (see  \cite[App.~B]{Katznelson-2004}) recalled below, if $\alpha \leq 1/2$, the class determined by \eqref{v.w.def} and \eqref{v.w.cond.intro} contains distributions that are neither integrable functions nor measures. In the statement of Theorem~\ref{thm:PZ}, the system of Rademacher functions $\{{\mathbf r}_n\}$ is a sequence of independent random variables taking the values $1$ and $-1$ with probability $1/2$ for each.
\begin{theorem}[Paley-Zygmund \cite{Paley-1930-26i,Paley-1930-26ii,Paley-1932-28iii}]
	\label{thm:PZ}
Let $\{a_n\}_{n \in 2\Z} \subset \C$ satisfy
\begin{equation}
	\sum_{n \in 2\Z} |a_n|^2 = \infty.
\end{equation}
Then the series
\begin{equation}\label{sum.rad}
\sum_{n \in 2\Z} a_n {\mathbf r}_n e^{\ii n x}
\end{equation}
is almost surely \emph{neither} Fourier series of an $L^1$-function nor of a Borel measure on $(0,\pi)$. 
\end{theorem}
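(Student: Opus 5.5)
This is the classical Paley--Zygmund theorem (Katznelson, App.~B), and the plan follows its standard proof with the variable $y = 2x$. Since $n$ ranges over $2\Z$, the functions $e^{\ii n x}$ are $\pi$-periodic, so the substitution $y=2x$ turns the series into a trigonometric series $\sum_{k\in\Z} a_{2k}{\mathbf r}_{2k} e^{\ii k y}$ on the circle $\mathbb T = [0,2\pi)$. Under this change of variables, $L^1(0,\pi)$ and Borel measures on $(0,\pi)$ correspond to $L^1(\mathbb T)$ and measures on $\mathbb T$. It therefore suffices to prove: if $\sum |b_k|^2=\infty$, then almost surely $\sum b_k {\mathbf r}_k e^{\ii k y}$ is not a Fourier--Stieltjes series.

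The first step uses Khintchine's inequality together with the Paley--Zygmund (second moment) inequality. Fix a point $y$ and write the partial sums as $S_N(y)=\sum_{|k|\le N} b_k{\mathbf r}_k e^{\ii k y}$. We have $\mathbb E|S_N(y)|^2 = \sigma_N^2 := \sum_{|k|\le N}|b_k|^2 \to\infty$, and by Khintchine $\mathbb E|S_N(y)|^4 \le C\sigma_N^4$. The second-moment inequality then gives $\mathbb P(|S_N(y)|\ge \sigma_N/2)\ge c>0$, uniformly in $y$ and $N$.

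The second step uses Fubini and a zero--one law to show that almost surely the series diverges unboundedly on a set of positive measure. Applying Fubini on $\Omega\times\mathbb T$, with positive probability the set $\{y : \limsup_N |S_N(y)|/\sigma_N \ge 1/2\}$ has measure at least $c\cdot 2\pi$, and in particular has positive measure. The event ``$\limsup_N |S_N(y)|=\infty$ on a set of positive $y$-measure'' is a tail event, since changing finitely many ${\mathbf r}_k$ alters $S_N$ by a bounded trigonometric polynomial. By Kolmogorov's 0--1 law this event therefore has probability one.

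The third step, which I expect to be the main obstacle, is to exclude the possibility that the series is the Fourier series of a measure. The standard route is as follows. If $\mu$ were a measure with $\hat\mu(k)=b_k{\mathbf r}_k$, then for almost every $y$ the Fejér (Cesàro) means of its series converge to $\frac{d\mu_{ac}}{dy}$ almost everywhere, and so are finite a.e. Tauberian control is not available, so I would instead pass to a random subsequence. One can also argue directly with Rademacher symmetrization: for independent signs, if $\sum b_k{\mathbf r}_k e^{\ii ky}$ is a Fourier--Stieltjes series with positive probability, then so is $\sum b_k{\mathbf r}_k{\mathbf r}'_k e^{\ii ky}$. Convolving with a Riesz product $\prod(1+\Re(\ldots))$ produces an $L^2$ control, namely $\sum|b_k|^2<\infty$, which is a contradiction. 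Concretely, I would follow Katznelson: if the series represents measures with positive probability, a closed-graph/Baire argument yields a uniform bound $\|\mu\|\le M$ on a set of positive probability. Averaging $|\hat\mu|^2$ against Fejér kernels then gives $\sum_{|k|\le N}(1-|k|/N)|b_k|^2\le C M^2$ through the Khintchine lower bound $\mathbb E\|S\|_{L^1}\ge c\,\sigma$ applied to Fejér means, since $\|\sigma_N\mu\|_{L^1}\le \|\mu\|$. Letting $N\to\infty$ contradicts $\sum|b_k|^2=\infty$. The $L^1$ case is then included, because every $L^1$-function defines a measure.
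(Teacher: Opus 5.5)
The paper gives no proof of this theorem; it cites Paley--Zygmund and Katznelson, App.~B. Your proposal therefore has to stand on its own, and as written it has a gap in Step~3.

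The reduction $y=2x$ and your Steps~1--2 are correct. However, they only concern the partial sums $S_N$. Partial sums unbounded on a set of positive measure do not rule out a Fourier--Stieltjes series: Kolmogorov's $L^1$-function has partial sums unbounded almost everywhere. So everything rests on Step~3, and there the argument is never actually carried out. The ``random subsequence'' and Riesz-product/symmetrization remarks are not developed into proofs, and the ``concrete'' route fails as stated.

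Write $\Phi_N(y)=\sum_{|k|\le N}\bigl(1-\frac{|k|}{N+1}\bigr)b_k\mathbf r_k e^{\ii ky}$ for the Fejér means. At each point $y$ their variance is $s_N^2=\sum_{|k|\le N}\bigl(1-\frac{|k|}{N+1}\bigr)^2|b_k|^2\to\infty$. Knowing $\|\mu_\omega\|\le M$ only on an event $E_M$ of \emph{positive} probability, you cannot deduce $\sum_{|k|\le N}(1-|k|/N)|b_k|^2\le CM^2$. The Khintchine bound $\mathbb E\|\Phi_N\|_{L^1}\ge c\,s_N$ is an expectation over all $\omega$, whereas $\|\Phi_N\|_{L^1}\le M$ is known only on $E_M$. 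Moreover, $\|\mu\|\le M$ by itself only yields $\sum_{|k|\le N}(1-|k|/N)|\hat\mu(k)|^2\le N\|\mu\|^2$.

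To rescue this route you would need two further ingredients:
\begin{enumerate}
\item The $0$--$1$ law for the tail event $\{\sup_N\|\Phi_N\|_{L^1}<\infty\}$, which is exactly the event that the series is Fourier--Stieltjes. Then either that event is null, or $\mathbb P(E_M)\to 1$.
\item The anti-concentration bound $\mathbb P\bigl(\|\Phi_N\|_{L^1}\ge\frac12\mathbb E\|\Phi_N\|_{L^1}\bigr)\ge c'>0$. This follows from the second-moment inequality, since $\mathbb E\|\Phi_N\|_{L^1}^2\le 2\pi\,\mathbb E\|\Phi_N\|_{L^2}^2=(2\pi)^2s_N^2$.
\end{enumerate}

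The simplest fix is to run your own Steps~1--2 with $\Phi_N$ in place of $S_N$. For fixed $y$, $\Phi_N(y)$ is a Rademacher sum with variance $s_N^2\to\infty$. Khintchine and the Paley--Zygmund inequality then give $\mathbb P(|\Phi_N(y)|\ge s_N/2)\ge c$, uniformly in $y$ and $N$. By Fatou's lemma for the $\limsup$ of events, $\limsup_N|\Phi_N(y)|=\infty$ with probability at least $c$. It then holds with probability one, because changing finitely many signs alters $\Phi_N(y)$ by an amount bounded uniformly in $N$. By Fubini, almost surely $\limsup_N|\Phi_N(y)|=\infty$ for a.e.\ $y$.

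This contradicts the fact you already quoted: the Fejér means of a measure converge a.e.\ to the finite density $d\mu_{ac}/dy$. That settles the measure case, and the $L^1$ case is included, as you say.
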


To state our result, we introduce the following objects. For $N \in \N$ and $h>0$, let 
\begin{equation}\label{Pi.def}
	\begin{aligned}
		\Sigma_0 & \equiv \Sigma_0(N,h) := \left\{z \in \C \, : \,  |\Re z| \leq N + \frac 12, \ |\Im z| \leq h \right\}, 
		\\ \Pi_k&:= B_\frac12(k), \quad 
		\Pi:=\bigcup_{k \in \N} \Pi_k, \quad k \in \Z.
	\end{aligned}
\end{equation}
Let $S_0^0$ and $P_k^0$, $k \in \Z$, be the Riesz projections of the unperturbed operator $A$ corresponding to the eigenvalues lying in $\Sigma_0$ and $\Pi_k$, respectively, 
\begin{equation}\label{Pn.SN.0.def}
	\begin{aligned}
		S_0^0:=\frac{1}{2\pi \ii} \int_{\partial \Sigma_0}(z-A)^{-1} \dd z, \quad 
		P_k^0  := \frac{1}{2\pi \ii} \int_{\partial \Pi_k}(z-A)^{-1} \dd z, \quad k \in \Z.
	\end{aligned}
\end{equation}

\begin{theorem}\label{thm:Dirac.RB}
Let $A$ be as in \eqref{A.def} and let $V$ be of the form \eqref{V.def}, \eqref{v.w.def}. Suppose that for some $\alpha>0$ the coefficients of $v$ and $w$ satisfy
\begin{equation}\label{v.w.cond}
|v_{2m}| + |w_{2m}| = \BigO \left(|m|^{-\alpha} \right), \quad |m| \to \infty. 
\end{equation}
Let $T = A+V$ be defined as in Subsection~\ref{ssec:T.def}. Then
\begin{enumerate}[\upshape (i)]
\item $T$ has compact resolvent and there exist $N_0 \in \N$ and $h_0>0$ such that 
	\begin{equation}\label{T.spec.loc}
	\sigma(T) = \spp(T) \subset \Sigma_0(N_0,h_0) \cup \bigcup_{|k| > N_0} \Pi_k;
	\end{equation}
\item with these $N_0$ and $h_0$, Riesz projections 
\begin{equation}\label{Pn.SN.def}
	\begin{aligned}
		S_0:=\frac{1}{2\pi \ii} \int_{\partial \Sigma_0}(z-T)^{-1} \dd z, \quad 
		P_n  := \frac{1}{2\pi \ii} \int_{\partial \Pi_n}(z-T)^{-1} \dd z, \quad |n|> N_0,
	\end{aligned}
\end{equation}
are well-defined, disjoint and  
\begin{equation}\label{rank.SN.Pn}
		\Rank  S_0  = \Rank \, S_0^0 = 2 N_0+1, 
		\quad
		\Rank  P_n  = \Rank \,  P_n^0 = 1, \quad  |n|>N_0;
\end{equation}
\item the system of projections
\begin{equation}\label{S0Pk.syst}
	\{S_0\} \cup \{P_n\}_{|n|>N_0}	
\end{equation}
is complete;
\item the system of projections \eqref{S0Pk.syst} has the Riesz property (hence the eigensystem of $T$ generates a Riesz basis of $L^2(0,\pi) \oplus L^2(0,\pi)$).	
\end{enumerate}
\end{theorem}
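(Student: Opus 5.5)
The plan is the resolvent-expansion scheme of Djakov--Mityagin, adapted to perturbations whose matrix elements decay along the anti-diagonals as in \eqref{V.form.tau}. Fix $\beta\in(0,1/2)$ and introduce the weight operator $K=(|A|+1)^{-\beta}$ (diagonal in $\{\psi_j\}$, with $\psi_j\mapsto(|j|+1)^{-\beta}\psi_j$). For $z$ in the resolvent set of $A$ we use the factorization
\[
(z-A)^{-1}V=(z-A)^{-1}K^{-1}\cdot KVK\cdot K^{-1}\ \text{(suitably regrouped)},
\]
that is, we work with the sandwiched operator
\[
R^0(z)^{1/2}\,V\,R^0(z)^{1/2},
\]
whose matrix elements are
\[
\frac{(V\psi_j,\psi_k)}{(z-j)^{1/2}(z-k)^{1/2}}, \qquad j,k\in\Z .
\]
Since $|(V\psi_j,\psi_k)|\le\tau(j+k)$ and $\tau(m)\lesssim |m|^{-\alpha}\log|m|$ by \eqref{tau.cond}, we estimate the Hilbert--Schmidt norm of this operator, or, if that fails for small $\alpha$, of a suitable power of it.

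\textbf{Step 1.} Show that, for $z$ outside $\Sigma_0(N,h)\cup\bigcup_{|k|>N}\Pi_k$ with $N,h$ large, the operator $B(z)=(z-A)^{-1/2}V(z-A)^{-1/2}$ (square roots taken with respect to the eigenbasis) satisfies $\|B(z)^{m}\|<1/2$ for some fixed $m$. The Neumann-type series
\[
(z-T)^{-1}=(z-A)^{-1/2}\,\bigl(I-B(z)\bigr)^{-1}(z-A)^{-1/2}
\]
then converges; this gives the compact resolvent and the localization \eqref{T.spec.loc}, and also serves as the definition of $T$ in the form sense. The key estimate is a sum of the type
\[
\sum_{j,k}\frac{\tau(j+k)^2}{|z-j|\,|z-k|}.
\]
On the line $\Re z=n+1/2$ this sum is not uniformly small when $\alpha\le 1/2$, because $\tau\notin\ell^2$. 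I therefore expect the real work to lie in estimating iterates $B(z)^{\nu}$, which are multiple sums along chains $j_0,\dots,j_\nu$ of the form
\[
\prod\frac{\tau(j_i+j_{i+1})}{|z-j_i|}.
\]
One exploits the anti-diagonal structure: $j+k$ small forces $k\approx -j$, which is far from $z\approx n$ unless $n$ is small. This is exactly the gain that makes the diagonal condition work for Dirac operators, as opposed to Schr\"odinger operators, and I expect the parity structure ($V$ couples $\psi_j$ with $\psi_{-j+m}$) to be essential here. Choosing $\nu>1/\alpha$ should make the resulting $\ell^2$-type sums converge with a decay $o(1)$ as $n\to\infty$ or $h\to\infty$. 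This step is the main obstacle.

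\textbf{Step 2.} For the ranks in (ii), use the homotopy $T_s=A+sV$, $s\in[0,1]$. The estimates of Step 1 are uniform in $s$, so the Riesz projections depend continuously on $s$ in norm, and their ranks are constant. This gives \eqref{rank.SN.Pn}; disjointness follows from the disjointness of the contours.

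\textbf{Step 3.} For (iii)--(iv), estimate $\|P_n-P_n^0\|$ through the contour integral of the difference of resolvents, in the strengthened form
\[
\sum_{|n|>N_0}\|P_n-P_n^0\|^2<\infty,
\]
or at least its quadratic-form variant
\[
\sum_n |\langle (P_n-P_n^0)f,g\rangle|\le C\|f\|\,\|g\|,
\]
obtained from the same chain sums with $z$ on $\partial\Pi_n$. Combined with $\|S_0-S_0^0\|$ being finite-rank-controlled, the Kato--Bari type theorem (as used in \cite{Djakov-2010-283}) applies: a family of projections quadratically close to an orthogonal complete family, with equal ranks, is complete and has the Riesz property. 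The completeness in (iii) follows from the same theorem, or separately from the resolvent decay along a ray, via a Keldysh/Phragm\'en--Lindel\"of argument.
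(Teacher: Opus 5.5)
\textbf{Gap in Step~1.} Step~1 carries all four parts of the theorem, and you leave it open on the basis of an incorrect diagnosis. The single-power sum $\sum_{j,k}|\tau(j+k)|^2/(|z-j||z-k|)$ does \emph{not} require $\tau\in\ell^2$. Group it along anti-diagonals $j+k=m$. For $z\in\Xi_n\setminus\Pi$ one has $|z-j|\gtrsim|n-j|+1$, so the inner sum is $\BigO\bigl(\log(|m-2n|+2)/(|m-2n|+1)\bigr)$. Hence
\[
\sum_m|\tau(m)|^2\log(|m-2n|+2)/(|m-2n|+1)
\]
is finite and tends to $0$ as $|n|\to\infty$ for any power decay of $\tau$; under \eqref{tau.cond} with $\alpha\le\frac12$ it is $\BigO(n^{-2\alpha}\log^4 n)$.

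The paper makes this quantitative in Lemma~\ref{lem:tau.est}, using only $\tau\in\ell^{2p}$ with $p>1/(2\alpha)$ and $\sup_{|j|\ge|n|}|\tau(j)|\to0$. This gives $\|B(z)\|\le4\sigma_N=o(1)$ on $\{z\notin\Pi,\ |\Re z|\ge N-\frac12\}$, and dominated convergence gives $\|B(z)\|=o(1)$ as $|\Im z|\to\infty$ with $|\Re z|\le N_0$. So the \emph{first} power of $B$ already satisfies $\|B(z)\|\le\frac12$ off $\Sigma_0(N_0,h_0)\cup\bigcup_{|k|>N_0}\Pi_k$. Via \eqref{T.res.repr} this yields:
\begin{itemize}
\item the definition of $T$ and its $\cS_p$ resolvent;
\item the spectral localization;
\item the $\BigO(1/r)$ resolvent bound on rays;
\item the bound $\|B\|\le4\sigma_{|n|}$ on $\partial\Pi_n$ used later.
\end{itemize}
Your chain sums for $B(z)^\nu$ with $\nu>1/\alpha$ are unnecessary, and you never estimate them; ``should make the sums converge'' is not an argument. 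Moreover, if $B(z)$ were really unbounded, neither $B(z)^\nu$ nor $(I-B(z))^{-1}$ would make sense, and so neither would your definition of $T$.

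\textbf{Gap in Step~3.} Your primary route fails exactly in the regime of interest: $\sum_n\|P_n-P_n^0\|^2<\infty$ is false in general for $\alpha\le\frac12$. Take $w=0$ and $v_{2m}=|m|^{-\alpha}$. The first-order part of $P_n-P_n^0$, namely $\frac{1}{2\pi\ii}\int_{\partial\Pi_n}K(z)B(z)K(z)\,\dd z$, maps $\psi_n$ to $\sum_{k\neq n}\frac{(V\psi_n,\psi_k)}{n-k}\psi_k$. Its $\psi_{n+2}$-coefficient has modulus $(n+1)^{-\alpha}/(4\sqrt\pi)$. The remainder is $\BigO(\sup_{\partial\Pi_n}\|B\|^2)=\BigO(n^{-2\alpha}\log^4 n)$. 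Hence $\|P_n-P_n^0\|\gtrsim n^{-\alpha}$, the $\ell^2$ sum diverges, and no Bari--Markus/Kato argument is available.

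Your fallback, $\sum_n|\langle(P_n-P_n^0)f,g\rangle|\le C\|f\|\|g\|$, is the right quantity. However, it feeds the Gohberg--Krein-type criterion $\sum_n|\langle P_nf,f\rangle|<\infty$, which presupposes a complete disjoint system. So (iii) must be proved independently; the paper does this via $(\ii t_0-T)^{-1}\in\cS_p$ and the ray bounds. The fallback bound itself also still has to be proved. By \eqref{res.dif}, \eqref{Kf.norm} and $\|B\|\le4\sigma_{|n|}$ on $\partial\Pi_n$, it reduces to
\[
\sup_k\sum_{|n|>N_0}\sigma_{|n|}/|n-k|<\infty,
\]
which follows from the power decay \eqref{sig.dec} and the discrete Hilbert-transform estimate \eqref{MS.lem.HT}.

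Step~2 (the homotopy for the ranks) is fine.
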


The estimates in Section~\ref{sec:proof} below suggest that Theorem~\ref{thm:Dirac.RB} could be proved for $\{v_{2m}\}$, $\{w_{2m}\} \in \ell^{2p}(\Z)$ with $p \in [1,\infty)$, which is a weaker condition than \eqref{v.w.cond}; the case $p=1$ is analyzed in \cite{Djakov-2010-283}. However, in the last part of the proof in Subsection~\ref{ssec:RB}, this would lead to more extensive estimates (similar to those in \cite{Djakov-2010-283,Mityagin-2025-31}).   

\section{Proof of Theorem~\ref{thm:Dirac.RB}}
\label{sec:proof}

Let 
\begin{equation}\label{K.def}
	K \equiv K(z):= (z-A)^{-\frac 12}, \quad z \in \rho(A),
\end{equation}
where for $0\neq w \in \C$ and $s\in \R$, the $s$-power of $w$ is taken as $w^s := |w|^s e^{\ii s \arg w}$ with $-\pi < \arg w \leq \pi$. 
By \eqref{V.form}, the sesquilinear form
\begin{equation}\label{KVK.form.def}
(V K \cdot, K^* \cdot ): \cE \times \cE \to \C
\end{equation}
satisfies 
\begin{equation}
(V K \psi_j, K^* \psi_k ) = \frac 12 \frac{\wt v_{-(j+k)}+ \wt w_{j+k}}{(z-j)^\frac12 (z-k)^\frac12}, \quad j,k \in \Z, \ z \in \rho(A).
\end{equation}
It follows that for any $ f\in \cE$
\begin{equation}\label{KVK.form.est}
|(V K f, K^* f )|^2 \leq \sum_{k,m \in \Z} \frac{|\tau(k+m)|^2}{|z-k||z-m|} \|f\|^2.
\end{equation}
The estimates in Lemma~\ref{lem:tau.est} below (see also the proof of Lemma~\ref{lem:KVK.norm}) show that the sum on the r.h.s.~of \eqref{KVK.form.est} is finite for any $z \in \rho(A)$. Hence the form \eqref{KVK.form.def} defines a bounded operator $B \equiv B(z)$ on $\cH$ by
\begin{equation}\label{B.def}
	\langle B(z) f,  g \rangle = (V K f, K^* g ), \quad f, g \in \cE, \ z \in \rho(A)
\end{equation}
and from \eqref{KVK.form.est}, we have
\begin{equation}\label{KVK.HS}
\|B(z)\|_{\rm HS}^2 \leq \sum_{k,m \in \Z} \frac{|\tau(k+m)|^2}{|z-k||z-m|}, \quad z \in \rho(A),
\end{equation}

\subsection{Estimates of $\|B(z)\|$}

We first prove the following technical lemma.

\begin{lemma}\label{lem:tau.est}
	Let $\tau \in \ell^{2p}(\Z)$ with $1 \leq p < \infty$. Then for every $\delta \in (0,2)$
	\begin{align}
		\sum_{k \neq n} \frac{|\tau(n+k)|^2}{|n-k|} & \leq 
		\frac{4p}{\delta} \|\tau\|_{\ell^{2p}}^\delta  \sup_{|j|\geq |n|} |\tau(j)|^{2-\delta} +  2p \frac{\|\tau\|_{\ell^{2p}}^2}{|n|^{\frac1p}}, \label{r.sum.1}
		\\
		\sum_{j,k \neq n} \frac{|\tau(j+k)|^2}{|n-j||n-k|}  & \leq 
	 \frac{64 p^2}{\delta^2} 	\|\tau\|_{\ell^{2p}}^\delta \sup_{|j|\geq |n|} |\tau(j)|^{2-\delta} +  128 p^2 \frac{\|\tau\|_{\ell^{2p}}^2}{|n|^{ \frac1{2p}}},
		\quad |n|>2.
		\label{r.sum.2}
	\end{align}
\end{lemma}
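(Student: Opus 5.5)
Writing $\ell = n+k$, so that $n-k = 2n-\ell$, the first sum becomes $\sum_{\ell \neq 2n} |\tau(\ell)|^2/|2n-\ell|$. I will split the range of $\ell$ into a ``far'' part $|\ell| < |n|$ and a ``near'' part $|\ell| \geq |n|$.

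\textbf{Far part, $|\ell|<|n|$.} Here $|2n-\ell| \geq |n|$, so the contribution is bounded by $|n|^{-1}\sum_{|\ell|<|n|}|\tau(\ell)|^2$. Hölder's inequality with exponents $p$ and $p'=p/(p-1)$ gives
\begin{equation*}
\sum_{|\ell|<|n|}|\tau(\ell)|^2 \leq \|\tau\|_{\ell^{2p}}^2 (2|n|)^{1-1/p}.
\end{equation*}
So this part is at most $2\|\tau\|_{\ell^{2p}}^2 |n|^{-1/p}$, which is within the term $2p\|\tau\|^2/|n|^{1/p}$.

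\textbf{Near part, $|\ell|\geq |n|$.} Factor $|\tau(\ell)|^2 = |\tau(\ell)|^{2-\delta}\,|\tau(\ell)|^{\delta}$ and pull out $\sup_{|j|\geq|n|}|\tau(j)|^{2-\delta}$. It remains to bound $\sum_{\ell\neq 2n} |\tau(\ell)|^\delta/|2n-\ell|$. Apply Hölder with exponents $q=2p/\delta$ and $q'$; this is valid since $\delta<2\leq 2p$ gives $q>1$. This yields
\begin{equation*}
\|\tau\|_{\ell^{2p}}^\delta \Big(\sum_{m\neq0}|m|^{-q'}\Big)^{1/q'}.
\end{equation*}
Since $q' - 1 = 1/(q-1)$ and $q - 1 \leq 2p/\delta$, we have $\sum_{m\neq 0}|m|^{-q'} \leq 2(1+1/(q'-1)) \leq 2(1+2p/\delta)$. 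Taking the $1/q'$ power keeps this of size $\lesssim p/\delta$, which produces the constant $4p/\delta$. This proves \eqref{r.sum.1}.

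\textbf{Double sum \eqref{r.sum.2}.} Substitute $\ell=j+k$ and sum over $j$ with $\ell$ fixed. The inner sum is
\begin{equation*}
\sum_{j\neq n,\ \ell-j\neq n}\frac{1}{|n-j|\,|n-\ell+j|}.
\end{equation*}
Writing $a=n-j$, $b=2n-\ell$, so that $n-\ell+j = b-a$, this is a convolution $\sum_{a\neq 0,b}1/(|a||b-a|)$. By the partial fraction $1/(a(b-a)) = b^{-1}(1/a + 1/(b-a))$, it is bounded by $C\log(2+|b|)/(1+|b|)$ when $b\neq0$, and by $\sum 1/a^2\le 4$ when $b=0$. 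So the double sum is controlled by
\begin{equation*}
\sum_\ell |\tau(\ell)|^2\,\frac{\log(2+|2n-\ell|)}{1+|2n-\ell|},
\end{equation*}
and I repeat the same split as before.

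For the far part $|\ell|<|n|$, the weight is $\lesssim \log|n|/|n|$. Using $\log|n|\leq 2p\,|n|^{1/(2p)}$, Hölder as before gives a bound $\lesssim p\,\|\tau\|^2|n|^{-1/p+1/(2p)} = p\,\|\tau\|^2|n|^{-1/(2p)}$. This is why the exponent degrades to $1/(2p)$ in \eqref{r.sum.2}.

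For the near part, I again extract $\sup|\tau|^{2-\delta}$ and apply Hölder with $q'$. Now $\sum_m(\log(2+|m|)/(1+|m|))^{q'}$ must be bounded; it behaves like $(q'-1)^{-2}\lesssim (p/\delta)^2$. This gives the $64p^2/\delta^2$ constant.

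\textbf{Main obstacle.} I expect the hardest step to be keeping explicit constants under control. This means bounding the logarithmic convolution sum uniformly, and then bounding $\sum_m (\log m/m)^{q'}$ by $C/(q'-1)^2$ via an integral comparison, $\int_1^\infty (\log x)^{q'}x^{-q'}\,dx=\Gamma(q'+1)/(q'-1)^{q'+1}$. That integral must be handled with care so that the powers stay at $p^2/\delta^2$. The restriction $|n|>2$ is used to absorb the small-$|n|$ logarithm terms.
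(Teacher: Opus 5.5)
Your argument is correct. For \eqref{r.sum.2} it takes a genuinely different route from the paper's.

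For \eqref{r.sum.1} the two proofs agree up to the choice of split. The paper cuts at $|n-k|\le|n|$, which forces $|n+k|\ge|n|$, and treats the rest by H\"older against the tail $\sum_{|m|>|n|}|m|^{-q}$. You cut at $|n+k|<|n|$ and use H\"older against the counting measure of $\{|\ell|<|n|\}$.

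For the double sum the paper never evaluates the inner sum. It partitions the lattice into $J_1$ (both $j,k$ within $|n|/2$ of $n$, so $|j+k|\ge|n|$) and $J_2,J_3$. Each piece is decoupled by Cauchy--Schwarz with asymmetric weights $|n-j|^{-2\beta}|n-k|^{-(2-2\beta)}$. Only pure power sums then occur, and the explicit constants come from integral comparisons and the choices $\beta=1/(s+1)$, $t=(q-1)/(q+1)$. The rate $|n|^{-1/(2p)}$ is the geometric mean of $|n|^{-(1/p-t)}$ and $|n|^{-t}$.

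You instead sum exactly along anti-diagonals: $W(b)=\sum_{a\ne0,b}\frac{1}{|a||b-a|}=\frac{4H_{|b|}}{|b|}-\frac{2}{b^2}$ for $b\ne0$ (with $H_m$ the harmonic numbers), and $W(0)=\pi^2/3$. This reduces \eqref{r.sum.2} to the one-dimensional problem of \eqref{r.sum.1} with a weight of order $\log|m|/|m|$. The gains are:
\begin{enumerate}[(a)]
\item the argument is shorter;
\item it explains the $p^2/\delta^2$ as the price of the logarithm in $\|W\|_{\ell^{q'}}$;
\item it yields the stronger far-part bound $\BigO(\log|n|\,|n|^{-1/p})$.
\end{enumerate}
So the halving of the exponent in \eqref{r.sum.2} is an artifact of the paper's decoupling. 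Your version would also sharpen the rate in \eqref{sig.dec}, although the theorem does not need this. The paper's route, in exchange, avoids any logarithmic sums.

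The constant bookkeeping you flagged does close. Using $H_m\le1+\log m$, the monotonicity of $(1+\log x)/x$ and the substitution $x=e^u$, one gets
\[
\|W\|_{\ell^{q'}}\le4\bigl(3+2\int_0^\infty(1+u)^{q'}e^{-(q'-1)u}\,\dd u\bigr)^{1/q'},
\]
which one checks is at most $16q^2=64p^2/\delta^2$. The far part is at most $8(1+\log|n|)\|\tau\|_{\ell^{2p}}^2|n|^{-1/p}\le16p\,e^{-1/2}\|\tau\|_{\ell^{2p}}^2|n|^{-1/(2p)}$, so the restriction $|n|>2$ is not even needed.

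One small slip in \eqref{r.sum.1}: use $1+1/(q'-1)=q$ exactly. This gives $\sum_{m\ne0}|m|^{-q'}\le2q=4p/\delta$ and then $(2q)^{1/q'}\le2q$. Your intermediate bound $2(1+2p/\delta)$ overshoots $4p/\delta$.
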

\begin{proof}
	We proceed as in \cite[Lem.~4.1]{Djakov-2010-283}. We give details to the case $p>1$; the case $p=1$ is analogous. Let $n \in \Z$ with $|n|>2$. 
	
	We split the sum on the l.h.s.~of \eqref{r.sum.1} as
	\begin{equation}\label{r.sum.1.a}
		\sum_{k \neq n}^\infty \frac{|\tau(n+k)|^2}{|n-k|} = \sum_{0<|n-k| \leq |n|} \frac{|\tau(n+k)|^2}{|n-k|} + \sum_{|n-k| > |n|} \frac{|\tau(n+k)|^2}{|n-k|}.
	\end{equation}
	First, notice that if $|n-k| \leq |n|$, then $|n+k| \geq 2|n|-|n-k| \geq |n|$. Hence
	\begin{equation}
		\sum_{0<|n-k| \leq |n|} \frac{|\tau(n+k)|^2}{|n-k|} 
		\leq 
		\sup_{|j|\geq |n|} |\tau(j)|^{2-\delta}  \sum_{0<|n-k| \leq |n|} \frac {|\tau(n+k)|^\delta }{|n-k|}.
	\end{equation}
	By H\"older inequality with $r=2p/\delta \in (1,\infty)$ and the conjugated $s$, we obtain
	\begin{equation}
	\sum_{0<|n-k| \leq |n|} \frac {|\tau(n+k)|^\delta }{|n-k|} 
	\leq
	\|\tau\|_{2p}^\delta 
	\left( 2 \sum_{k =1}^\infty \frac{1}{k^s} \right)^\frac 1s.
	\end{equation}
	Using
	\begin{equation}\label{sum.s.est}
	\left( 2 \sum_{k =1}^\infty \frac{1}{k^s} \right)^\frac 1s \leq \left(2 + 2 \int_1^\infty \frac{\dd x}{x^s} \right)^\frac1s = (2r)^\frac1s \leq 2r = \frac{4p}{\delta}, 
	\end{equation}	
	we arrive at
	\begin{equation}
	\sum_{0<|n-k| \leq |n|} \frac{|\tau(n+k)|^2}{|n-k|} 
	\leq 
	\frac{4p}{\delta} \|\tau\|_{2p}^\delta  \sup_{|j|\geq |n|} |\tau(j)|^{2-\delta}.
	\end{equation}
	For the second sum on the r.h.s.~of \eqref{r.sum.1.a}, H\"older inequality yields
	\begin{equation}
		\begin{aligned}
			\sum_{|n-k| > |n|} \frac{|\tau(n+k)|^2}{|n-k|} & \leq
			\left(
			\sum_{|n-k| > |n|} |\tau(n+k)|^{2p}
			\right)^{\frac 1p}
			\left(
			\sum_{|n-k| > |n|} \frac{1}{|n-k|^q}
			\right)^\frac1q
			\\
			& \leq \|\tau\|_{\ell^{2p}}^2 
			\left(
			2 \sum_{k > |n|} \frac{1}{k^q}
			\right)^\frac1q 
			\leq 
			\|\tau\|_{\ell^{2p}}^2 
			\left(
			2 \int_{|n|}^\infty \frac{\dd x}{x^q}
						\right)^\frac1q  
			\leq 
			2p
			\frac{\|\tau\|_{\ell^{2p}}^2}{|n|^\frac1p}.
		\end{aligned}
	\end{equation}
In summary, \eqref{r.sum.1} is proved.

	To show \eqref{r.sum.2} we split the summation to three subsets
	\begin{equation}
		\begin{aligned}
			J_1 &:= \{ (j,k) \, : \, 0<|n-j| \leq |n|/2, \ 0<|n-k| \leq |n|/2\},
			\\
			J_2 &:= \{ (j,k) \, : \, j \neq n, \ |n-k| > |n|/2\},
			\\
			J_3 &:= \{ (j,k) \, : \, |n-j| > |n|/2, \ k \neq n\}.
		\end{aligned}
	\end{equation}
	For the first subset $J_1$, notice that
	\begin{equation}
		|j+k| = |2n - (n-j) - (n-k)| \geq 2 |n| - |n-j| - |n-k| \geq |n|, 
	\end{equation}
	hence,
	\begin{equation}\label{sum.db.1}
		\sum_{(j,k) \in J_1}  \frac{|\tau(j+k)|^2}{|n-j||n-k|} 
		\leq
		\sup_{|j|\geq |n|} |\tau(j)|^{2-\delta} \sum_{(j,k) \in J_1}  \frac{|\tau(j+k)|^\delta}{|n-j||n-k|}.
	\end{equation}
	For any $\beta$ satisfying
	\begin{equation}\label{beta.cond.del}
	  \frac12 \left( 1 - \frac{\delta}{2p}  \right) <\beta < \frac 12,
	\end{equation}
	we obtain by Cauchy-Schwarz inequality that
	\begin{equation}
	\begin{aligned}
	& \sum_{(j,k) \in J_1}  \frac{|\tau(j+k)|^\delta}{|n-j||n-k|} 
	 =
	\sum_{(j,k) \in J_1}  \frac{|\tau(j+k)|^\delta }{|n-j|^{\beta+(1-\beta)}|n-k|^{(1-\beta)+\beta}} 
	\\
	& \qquad 
	\leq 
	\left(\sum_{(j,k) \in J_1}  \frac{|\tau(j+k)|^\delta}{|n-j|^{2 \beta}|n-k|^{2-2\beta}} \right)^\frac12
	\left(\sum_{(j,k) \in J_1}  \frac{|\tau(j+k)|^\delta}{|n-j|^{2 -2\beta}|n-k|^{2\beta}} \right)^\frac12
	\\
	& \qquad 
	\leq 
	\sum_{j,k \neq n}  \frac{|\tau(j+k)|^{\delta}}{|n-j|^{2 \beta}|n-k|^{2-2\beta}}
	= 
	\sum_{k \neq n} \frac{1}{|n-k|^{2-2\beta}} \sum_{j \neq n} \frac{|\tau(j+k)|^{\delta}}{|n-j|^{2 \beta}}.
	\end{aligned}	
	\end{equation}
	Applying H\"older inequality with $r = 2 p/\delta \in (1,\infty)$ and the conjugated $s$, we obtain
	\begin{equation}\label{sum.db.2}
	\begin{aligned}
			 \sum_{(j,k) \in J_1}  \frac{|\tau(j+k)|^\delta}{|n-j||n-k|} 
			& \leq 
			\|\tau\|_{\ell^{2p}}^\delta  \sum_{k = 1}^\infty \frac{2}{k^{2-2\beta}} 
		\left(
		2 \sum_{j =1}^\infty \frac{1}{j^{2 \beta s}}
		\right)^\frac{2 \beta}{2 \beta s}.
		\end{aligned}	
	\end{equation}
Employing \eqref{sum.s.est}, we have
\begin{equation}
\left(
2 \sum_{j =1}^\infty \frac{1}{j^{2 \beta s}}
\right)^\frac{2 \beta}{2 \beta s}
\leq 
\left(
\frac{4 \beta }{2 \beta -\frac1s}
\right)^\frac1s
\leq 
\left(
\frac{ 2 }{2 \beta -\frac1s}
\right)^\frac1s.		
\end{equation}
We select
\begin{equation}
\beta = \frac1{s+1};
\end{equation}	
notice that \eqref{beta.cond.del} is satisfied and also
\begin{equation}
\frac1 {2 \beta - \frac 1s } = \frac{s+1}{1-\frac 1s} = r (s+1).
\end{equation}	
Hence
\begin{equation}\label{sum.db.3}
\left(
2 \sum_{j =1}^\infty \frac{1}{j^{2 \beta s}}
\right)^\frac{2 \beta}{2 \beta s}
\leq 
(2 r)^\frac1s (s+1)^\frac1s \leq 4 r;
\end{equation}
in the last step we use that $(s+1)^\frac1s$ is decreasing on $[1,\infty)$ and equal $2$ if $s=1$. 
Similarly, we arrive at
\begin{equation}\label{sum.db.4}
\sum_{k = 1}^\infty \frac{2}{k^{2-2\beta}} \leq 4r.
\end{equation}
Combining \eqref{sum.db.1}, \eqref{sum.db.2}, \eqref{sum.db.3} and \eqref{sum.db.4}, we obtain
\begin{equation}\label{sum.db.tot}
	\sum_{(j,k) \in J_1}  \frac{|\tau(j+k)|^2}{|n-j||n-k|} 
	\leq
	16 r^2 	\|\tau\|_{\ell^{2p}}^\delta \sup_{|j|\geq |n|} |\tau(j)|^{2-\delta} = \frac{64 p^2}{\delta^2} 	\|\tau\|_{\ell^{2p}}^\delta \sup_{|j|\geq |n|} |\tau(j)|^{2-\delta}.
	\end{equation}

	For the remaining sums over $J_2$ and $J_3$, Cauchy-Schwarz inequality yields
	\begin{equation}\label{r.2.sum}
		\begin{aligned}
			&\sum_{(j,k) \in J_2}  \frac{|\tau(j+k)|^2}{|n-j||n-k|} = 
			\sum_{(j,k) \in J_3}  \frac{|\tau(j+k)|^2}{|n-j||n-k|} 
			\\
			&\qquad \leq 
			\left(\sum_{(j,k) \in J_3}  \frac{|\tau(j+k)|^2}{|n-j|^{2 \beta}|n-k|^{2-2\beta}} \right)^\frac12
			\left(\sum_{(j,k) \in J_3}  \frac{|\tau(j+k)|^2}{|n-j|^{2 -2\beta}|n-k|^{2\beta}} \right)^\frac12,
		\end{aligned}
	\end{equation}
	where we choose $\beta$ so that 
	\begin{equation}\label{beta.cond}
	\frac{1}{2} \left(1 - \frac 1p\right) = \frac 1 {2q} < \beta < \frac 12.
	\end{equation}
	For the next steps we introduce a parameter
	\begin{equation}\label{t.ineq}
	t = 1 - 2\beta \in (0, 1/p) \subset (0,1],
	\end{equation}
	which we choose as
	\begin{equation}
	t = \frac{q-1}{q+1};
	\end{equation}
	notice that \eqref{beta.cond} holds and also
	\begin{equation}\label{t.id}
	q \left(\frac 1p - t\right) = (q-1) \left( 1 - \frac{q}{q+1}	\right)  = t.
	\end{equation}
	
	By H\"older inequality, \eqref{t.id}, \eqref{t.ineq} and $1/t \leq 2p$ in the last step,
	\begin{equation}\label{r.2.sum.a}
		\begin{aligned}
			&\sum_{(j,k) \in J_3}  \frac{|\tau(j+k)|^2}{|n-j|^{2\beta} |n-k|^{1+t} }
			= \sum_{k \neq n} \frac{1}{|n-k|^{1+t}} \sum_{|n-j|> |n|/2} \frac{|\tau(j+k)|^2}{|n-j|^{2 \beta}} 
			\\
			& \quad 
			 \leq 
			\|\tau \|_{\ell^{2p}}^2
			\sum_{k \neq n} \frac{ 1 }{|n-k|^{1+t}} 
			\left(
			\sum_{|n-j|> |n|/2} \frac{1}{|n-j|^{2 \beta q}}
			\right)^\frac1q
			\\
			& \quad 
			 \leq
			2^{1+\frac1q} \|\tau \|_{\ell^{2p}}^2 \sum_{k =1 }^\infty \frac{ 1 }{k^{1+t}} 
			\left(
			\sum_{j> |n|/2} \frac{1}{j^{2 \beta q}}
			\right)^\frac1q
			\\
			& \quad   
			\leq 
			2^{1+\frac1q} \|\tau \|_{\ell^{2p}}^2 \left(1 + \int_1^\infty \frac{\dd x}{x^{1+t}}\right)
			\left( \int_{|n|/2}^\infty \frac{\dd x}{x^{2 \beta q}} \right)^\frac1q
			\\
			& \quad 
			= 2^{1+2 \beta } \frac{1+t}{t} \frac{1}{\left(q \left(\frac 1p-t \right) \right)^\frac1q} \frac{\|\tau \|_{\ell^{2p}}^2}{|n|^{\frac1p-t}}
			\leq  2^5 p^2 \frac{\|\tau \|_{\ell^{2p}}^2}{|n|^{\frac1p-t}}.
		\end{aligned}
	\end{equation}
	Similarly, using that $1/t \leq 2p$ and $q^{1/q} \leq 2$ in the last step,
	\begin{equation}\label{r.2.sum.b}
		\begin{aligned}
			&\sum_{(j,k) \in J_3}  \frac{|\tau(j+k)|^2}{|n-j|^{1+t} |n-k|^{2\beta} }
			= \sum_{|n-j| > |n|/2 } \frac{1}{|n-j|^{1+t}} \sum_{k \neq n} \frac{|\tau(j+k)|^2}{|n-k|^{2\beta}}   
			\\
			& \quad \leq 
			2^{1+\frac 1q} \|\tau\|_{\ell^{2p}}^2 \sum_{j > |n|/2 } \frac{1 }{j^{1+t}} 
			\left(
			\sum_{k =1 }^\infty \frac{1}{k^{2 \beta q}}
			\right)^\frac1q
			\leq 
			\frac{2^3 q^\frac1q}{t \left(q \left(\frac 1p-t \right) \right)^\frac1q} \frac{\|\tau\|_{\ell^{2p}}^2}{|n|^t}
			\\
			& \quad 
			\leq 2^6 p^2  \frac{\|\tau\|_{\ell^{2p}}^2}{|n|^t}.
		\end{aligned}
	\end{equation}
Combining \eqref{r.2.sum}, \eqref{r.2.sum.a} and \eqref{r.2.sum.b}, we arrive at
	\begin{equation}
		\sum_{(j,k) \in J_2}  \frac{|\tau (j+k)|^2}{|n-j||n-k|} + \sum_{(j,k) \in J_3}  \frac{|\tau (j+k)|^2}{|n-j||n-k|}
		\leq 
		 128 p^2  \frac{\|\tau \|_{\ell^{2p}}^2 } {|n|^{\frac1{2p}}}.
	\end{equation}
thus \eqref{r.sum.2} is proved.
\end{proof}

For $N \in \N$, let 
	\begin{equation}\label{sigma.N.def}
	\sigma_N:= \sup_{|n| \geq N} \left[
	|\tau(2n)|^2 + \sum_{j \neq n} \frac{|\tau(j+n)|^2}{|n-j|} + \sum_{j,k \neq n} \frac{|\tau(j+k)|^2}{|n-j||n-k|}
	\right]^\frac12; 
\end{equation}	
notice that for $\tau$ satisfying \eqref{tau.cond}, we get by Lemma~\ref{lem:tau.est} that for any fixed $\eps>0$
\begin{equation}\label{sig.dec}
\sigma_N = 
\begin{cases}
\BigO(N^{-1/4 }) ,  & \text{if } \alpha > 1/2,
\\
\BigO(N^{-\alpha/2 +\eps}), & \text{if } \alpha \in (0,1/2],
\end{cases}
\quad 	N \to +\infty.
\end{equation}

Next we estimate the norm of $B(z)$. 

\begin{lemma}\label{lem:KVK.norm}
Let $V$ be as in \eqref{V.def}, \eqref{v.w.def}, \eqref{v.w.cond.intro}, let $K$ be as in \eqref{K.def}, let $B$ be the bounded operator determined by \eqref{B.def}, and let $\sigma_N$ be as in \eqref{sigma.N.def}. Then
\begin{enumerate}[\upshape (i)]
	\item we have
	\begin{equation}\label{KVK.est.N}
		\sup_{\substack{z \notin \Pi \\[1mm] |\Re z| \geq N - \frac 12} } \|B(z)\| 
		\leq 4 \sigma_N = o(1), \quad N \to +\infty;
	\end{equation}	 
\item for any fixed $N_0 \in \N$, 
\begin{equation}\label{KVK.est.Y}
\sup_{\substack{|\Re z| \leq N_0 \\[1mm] |\Im z| \geq Y }} \|B(z)\| = o(1), \quad Y \to + \infty.
\end{equation}
\end{enumerate}
\end{lemma}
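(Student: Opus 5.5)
The plan is to estimate $\|B(z)\|$ through the Hilbert--Schmidt bound \eqref{KVK.HS}, i.e.
\[
\|B(z)\|^2 \leq \sum_{j,k \in \Z} \frac{\tau(j+k)^2}{|z-j||z-k|},
\]
and then compare $|z-j|$ with $|n-j|$ for a suitably chosen integer $n$ near $\Re z$.

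For (i), fix $z\notin \Pi$ with $|\Re z|\ge N-\frac12$, and let $n\in\Z$ be the integer nearest to $\Re z$, so that $|n|\ge N$. Since $z\notin \Pi_n$, we have $|z-n|\ge \frac12$. For $j\ne n$ we have $|z-j|\ge |\Re z - j|\ge |n-j|-\frac12\ge \frac12|n-j|$. Hence
\[
\frac{1}{|z-j|}\le \frac{2}{\max(1,|n-j|)} \qquad \text{for all } j .
\]
We split the double sum into four parts according to whether $j=n$ and whether $k=n$:
\begin{itemize}
\item the term $j=k=n$ is bounded by $4\tau(2n)^2$;
\item the terms with exactly one index equal to $n$ give at most $2\cdot 4\sum_{j\ne n}\tau(j+n)^2/|n-j|$;
\item the terms with $j,k\ne n$ give at most $4\sum_{j,k\ne n}\tau(j+k)^2/(|n-j||n-k|)$.
\end{itemize}
Altogether $\|B(z)\|^2\le 8\,[\cdots]\le 8\sigma_N^2$, and therefore $\|B(z)\|\le 2\sqrt2\,\sigma_N\le 4\sigma_N$. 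Finally, $\sigma_N=o(1)$ follows from \eqref{sig.dec}, which in turn comes from Lemma~\ref{lem:tau.est} applied with a $p$ such that $\tau\in\ell^{2p}$; this holds by \eqref{tau.cond} once $2p\alpha>1$, and then the terms $\sup_{|j|\ge|n|}|\tau(j)|^{2-\delta}$ tend to $0$.

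For (ii), fix $N_0$ and let $|\Re z|\le N_0$ and $|\Im z|\ge Y$. Then $|z-j|\ge \max(Y, |j|-N_0)$, so $|z-j|\ge c\,(Y+|j|)$ with some $c=c(N_0)>0$, e.g.\ $c=\frac1{2(N_0+1)}$. This gives
\[
\|B(z)\|^2\le c^{-2}\sum_{j,k}\frac{\tau(j+k)^2}{(Y+|j|)(Y+|k|)} .
\]
Each summand tends to $0$ as $Y\to\infty$, so by dominated convergence it suffices to show that the sum with $Y=1$ is finite. That sum is bounded by $\sum_{j,k}\tau(j+k)^2/((1+|j|)(1+|k|))$, which is finite by the same splitting as in Lemma~\ref{lem:tau.est} with $n=0$. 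Concretely, substitute $m=j+k$ and use
\[
\sum_{j} \frac{1}{(1+|j|)(1+|m-j|)}\lesssim \frac{\log(2+|m|)}{1+|m|},
\]
so the sum is $\lesssim\sum_m \tau(m)^2\log(2+|m|)/(1+|m|)$. This is finite by \eqref{tau.cond} since $\alpha>0$.

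The main care points are the elementary comparison $|z-j|\ge\frac12|n-j|$ in (i), which needs the rounding choice of $n$ together with $z\notin\Pi$, and verifying the finiteness of the majorant in (ii). The convolution estimate above handles the latter directly, so I expect no real obstacle.
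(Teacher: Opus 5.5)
Your proof is correct. Part (i) is the paper's argument essentially verbatim. The paper places $z$ in the strip $\{n-\tfrac12\le\Re z\le n+\tfrac12\}$ rather than taking the nearest integer; at the ties $\Re z=\pm(N-\tfrac12)$ you should round away from $0$ to guarantee $|n|\ge N$. It ends with the same bound $8\sigma_N^2$ on the squared Hilbert--Schmidt norm.

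Part (ii) follows a different route. The paper splits the double sum according to whether $|j|,|k|$ lie below or above $2N_0$:
\begin{itemize}
\item the finite block gets a factor $Y^{-2}$;
\item the mixed block gets a factor $Y^{-1}$, plus H\"older with $\tau\in\ell^{2p}$;
\item the large--large block is bounded by $16\sum\tau(j+k)^2/((|j|+Y)(|k|+Y))$.
\end{itemize}
It then shows the $Y$-free majorant of the last block is finite, using Cauchy--Schwarz with weights $|j|^{2\gamma}|k|^{2-2\gamma}$ and H\"older with $p>\max\{1,1/(2\alpha)\}$, and applies dominated convergence.

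You instead absorb $N_0$ into one constant via $|z-j|\ge c(N_0)(Y+|j|)$ for all $j$. With $c=\frac{1}{2(N_0+1)}$ this holds once $Y\ge\tfrac12$, which suffices since $Y\to\infty$. So no splitting is needed. The convolution bound $\sum_j(1+|j|)^{-1}(1+|m-j|)^{-1}\lesssim\log(2+|m|)/(1+|m|)$ then reduces finiteness of the dominating sum to a one-dimensional series. This is shorter and more transparent, and it loses no generality. If only $\tau\in\ell^{2p}$ with $p>1$ is known, a single H\"older step on $\sum_m\tau(m)^2\log(2+|m|)/(1+|m|)$ closes the argument. So your route covers the same $\ell^{2p}$ setting the paper's computation is designed for.

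Your aside that Lemma~\ref{lem:tau.est} with $n=0$ would also work is not literally available: that lemma assumes $|n|>2$, and its bounds blow up at $n=0$. Your explicit convolution estimate makes the aside unnecessary.
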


\begin{proof}
Recall the estimate of the norm of $B(z)$  in \eqref{KVK.HS}.
\begin{enumerate}[\upshape (i), wide]
\item 
	Let %
	\begin{equation}\label{Sigma.def}
		\Xi_n:= \left\{ z \in \C \, : \, n - \frac 12 \leq \Re z \leq n + \frac 12 \right\}, \quad n \in \Z, 
	\end{equation}
	i.e.,~$\Xi_n$ are vertical strips around the eigenvalues $n \in \Z$. Let $z \in \Xi_n$ for some $n \in \Z$ with $z \notin \Pi$. Since $|z-n| \geq 1/2$,
	\begin{equation}\label{sum.KVK.z.1}
		\sum_{j,k} \frac{|\tau(j+k)|^2}{|z-j||z-k|} \leq  4 |\tau(2n)|^2 + 4 \sum_{j \neq n } \frac{|\tau(j+n)|^2}{|z-j|}
		+ \sum_{j,k \neq n} \frac{|\tau(j+k)|^2}{|z-j||z-k|}.
	\end{equation} 
	Next, for $j \neq n$, using that $|\Re z -n | \leq 1/2$,
	\begin{equation}\label{sum.KVK.z.2}
		\begin{aligned}
			|z-j| &\geq |\Re z -j| \geq |n-j|-|\Re z -n|
			\\
			& \geq \frac 12 |n-j| + \frac 12 (|n-j|- 2|\Re z -n| ) 
			\geq \frac 12(|n-j| + |n-j| -1)  
			\\
			& \geq \frac 12 |n-j|.
		\end{aligned}
	\end{equation}
	Hence, for any $z \in \Xi_n$ with $z \notin \Pi$, 
	\begin{equation}\label{sum.KVK.z.3}
		\sum_{j,k} \frac{|\tau(j+k)|^2}{|z-j||z-k|} \leq  4 |\tau(2n)|^2 + 8 \sum_{j \neq n } \frac{|\tau(j+n)|^2}{|n-j|}
		+ 4 \sum_{j,k \neq n} \frac{|\tau(j+k)|^2}{|n-j||n-k|}
	\end{equation} 
	and so \eqref{KVK.est.N} is proved.
\item Let $|\Re z| \leq N_0$ with the fixed $N_0 \in \N$ and let  $|\Im z | > Y$ with $Y>0$.
	Then
	\begin{equation}\label{sum.split}
		\begin{aligned}
			\sum_{j,k} \frac{|\tau(j+k)|^2}{|z-j||z-k|} & \leq 
			\sum_{|j| < 2N_0 ,k < 2 N_0} \frac{|\tau(j+k)|^2}{|z-j||z-k|} + \sum_{|j| \geq 2N_0, |k| \geq 2N_0} \frac{|\tau(j+k)|^2}{|z-j||z-k|} 
			\\ & \qquad + 2 \sum_{|j| < 2N_0, |k| \geq 2N_0} \frac{|\tau(j+k)|^2}{|z-j||z-k|}  .
		\end{aligned}
	\end{equation} 
	First, the finite sum can be estimated in a straightforward way as
	\begin{equation}\label{sum.split.1}
		\sum_{|j| < 2N_0 ,k < 2 N_0} \frac{|\tau(j+k)|^2}{|z-j||z-k|}
		\leq 
		\frac{1}{Y^2} \sum_{|j| < 2N_0 ,k < 2 N_0} |\tau(j+k)|^2 = o(1), \quad Y \to +\infty.
	\end{equation}
	To estimate the second sum, notice that for $|j| \geq  2N_0$
	\begin{equation}
		|\Re z - j| \geq |j| - |\Re z| \geq \frac {|j|}2  + N_0 - N_0 \geq \frac {j}2. 
	\end{equation}
	Thus
	\begin{equation}
		\begin{aligned}
			\sum_{|j| \geq 2N_0, |k| \geq 2N_0} \frac{|\tau(j+k)|^2}{|z-j||z-k|} 
			&\leq 4 \sum_{|j| \geq 2N_0, |k| \geq 2N_0} \frac{|\tau(j+k)|^2}{(|\Re z-j|+Y)(|\Re z-k|+Y)} 
			\\
			& \leq 16 \sum_{|j| \geq 2N_0, |k| \geq 2N_0} \frac{|\tau(j+k)|^2}{(|j|+Y)(|k|+Y)}. 
		\end{aligned}	
	\end{equation}
	Let $p> \max\{1, 1/(2\alpha) \}$ where $\alpha$ is as in \eqref{tau.cond} and let $ \gamma$ be such that
	\begin{equation}\label{gam.sel.0}
		\frac12 \left(1- \frac{1}{p} \right) < \gamma < \frac 12.
	\end{equation}
	Then Cauchy-Schwarz inequality yields 
	\begin{equation}
		\begin{aligned}
			\sum_{|j| \geq 2N_0, |k| \geq 2N_0} \frac{|\tau(j+k)|^2}{(|j|+Y)(|k|+Y)}	& \leq  \sum_{j,k \neq 0} \frac{|\tau(j+k)|^2}{|j|^{\gamma+(1-\gamma)}|k|^{(1-\gamma)+\gamma}}
			\\
			& \leq 
			\left(\sum_{j,k \neq 0 }  \frac{|\tau(j+k)|^2}{|j|^{2 \gamma}|k|^{2-2\gamma}} \right)^\frac12
			\left(\sum_{j,k \neq 0}  \frac{|\tau(j+k)|^2}{|j|^{2 -2\gamma}|k|^{2\gamma}} \right)^\frac12
			\\
			& = \sum_{k \neq 0 } \frac1 {|k|^{2-2\gamma}}  \sum_{j \neq 0 } \frac{|\tau(j+k)|^2}{|j|^{2 \gamma}}.
		\end{aligned}	
	\end{equation}
	Next, by H\"older inequality
	\begin{equation}
		\sum_{j \neq 0 } \frac{|\tau(j+k)|^2}{|j|^{2 \gamma}}	
		\leq 
		\|\tau\|_{\ell^{2p}}^2 
		\left(
		\sum_{j \neq 0} \frac{1}{|j|^{2 \gamma q}}
		\right)^\frac1 q.
	\end{equation}
	The choice of $\gamma$ in \eqref{gam.sel.0} guarantees that both $2-2\gamma>1$ and  $2 \gamma q >1$, so there exists $C>0$, independent of $Y$ such that
	\begin{equation}
	\sum_{|j| \geq 2N_0, |k| \geq 2N_0} \frac{|\tau(j+k)|^2}{(|j|+Y)(|k|+Y)} \leq C \|\tau\|_{\ell^{2p}}^2 < \infty.	
	\end{equation}	
	Hence the dominated convergence theorem yields that
	\begin{equation}\label{sum.split.2}
		\sum_{|j| \geq 2N_0, |k| \geq 2N_0} \frac{|\tau(j+k)|}{(|j|+Y)(|k|+Y)}  = o(1), \quad Y \to + \infty.
	\end{equation}
	The estimate of the third sum on the r.h.s.~of \eqref{sum.split} is simpler; namely by H\"older inequality (with $p> \max\{1, 1/(2\alpha) \}$)
	\begin{equation}\label{sum.split.3}
		\begin{aligned}
			\sum_{|j| < 2N_0, |k| \geq 2N_0} \frac{|\tau(j+k)|^2}{|z-j||z-k|} 
			& \leq 
			\frac{4}{Y} \sum_{|j| < 2N_0, |k| \geq 2N_0}  \frac{|\tau(j+k)|^2}{|k|} 
			\\
			& \leq 
			\frac{8 N_0}{Y} \|\tau\|_{\ell^{2p}}^2 \left(\sum_{k \neq 0} \frac{1}{|k|^q}\right)^{\frac1q} = o(1), \quad Y \to + \infty.
		\end{aligned}
	\end{equation}
	Combining \eqref{sum.split}, \eqref{sum.split.1}, \eqref{sum.split.2} and \eqref{sum.split.3}, we arrive at \eqref{KVK.est.Y}. \qedhere
\end{enumerate}
\end{proof}

\subsection{Definition of $T$, spectral localization and completeness}
\label{ssec:T.def}

It follows from \eqref{KVK.est.Y} that there exists $t_0>0$ such that 
\begin{equation}
\|B(\ii t_0)\| \leq \frac 12,
\end{equation}
thus a densely defined closed operator $T$ with non-empty resolvent set, representing the form sum of $A$ and $V$, can be defined similarly to the pseudo-Friedrichs extension, see e.g.~\cite[Thm.~VI.3.11]{Kato-1966},  \cite[Thm.~IV.4.2]{EE} or \cite[Sec.~2]{Shkalikov-2016-71}. Moreover, for $z \in \rho(A)$ such that $I - B(z)$ is boundedly invertible the resolvent of $T$ has a representation 
\begin{equation}\label{T.res.repr}
(z-T)^{-1} = K(z)(I - B(z))^{-1} K(z).
\end{equation}
Since $K$ is in the Schatten class $\cS_{2p}$ for any $p>1$, the resolvent of $T$ is in the Schatten class $\cS_p$ for any $p>1$.

From \eqref{KVK.est.N}, there exists $N_0 \in \N$ such that 
\begin{equation}\label{N_0.def}
	\sup_{\substack{z \notin \Pi \\[1mm] |\Re z| \geq N_0 - \frac 12} } \|B(z)\| 
	\leq \frac 12.
\end{equation}	
and, for this $N_0$, by \eqref{KVK.est.Y} there exists $h_0>0$ such that 
\begin{equation}\label{h.def}
	\sup_{\substack{|\Re z| \leq N_0  \\[1mm] |\Im z| \geq h_0 }} \|B(z)\| \leq \frac 12.
\end{equation}
Employing \eqref{T.res.repr}, we arrive at the spectral localization for $T$ in \eqref{T.spec.loc}. Moreover, the Riesz projections in \eqref{Pn.SN.def} are well-defined, disjoint and the claim on the ranks in \eqref{rank.SN.Pn} follows by a standard perturbation argument (see e.g.~\cite[proof of Prop.~3.3]{Mityagin-2025-31}).

Finally, the completeness of \eqref{S0Pk.syst} follows by \cite[Cor.XI.9.31]{DS2} since $(\ii t_0 - T)^{-1} \in \cS_p$ with any $p>1$ and for any $\omega \in (-\pi,0) \cup (0,\pi)$ 
\begin{equation}
	\|(r e^{\ii \omega} - T)^{-1}\| = \BigO(1/r), \quad r \to +\infty.
\end{equation}

\subsection{Riesz property}
\label{ssec:RB}
We employ the criterium \cite[Chap.~6]{Gohberg-1969}, \cite[\S 6]{Shkalikov-2016-71} or \cite[Thm.~A]{Motovilov-2017-8}. Since the system \eqref{S0Pk.syst} is disjoint and complete, we need to show that
\begin{equation}\label{RB.sum.crit}
\forall f \in \cH: \	\sum_{|n|\geq N_0} |\langle P_n f,f \rangle| < \infty.
\end{equation}
For $z \in \partial \Pi_n$ with $|n| \geq N_0$, we expand the resolvent of $T$ in \eqref{T.res.repr} further and obtain
	\begin{equation}\label{res.dif}
		(z-T)^{-1} = (z-A)^{-1} + K(z) B(z) (I-B(z))^{-1}K(z).
	\end{equation}  
	Since $A=A^*$, to verify \eqref{RB.sum.crit}, it suffices to show that for every $f \in \cH$
	\begin{equation}\label{sum.conv}
		\sum_{|n|>N_0}  \int_{\partial \Pi_n} | \langle B(z) (I-B(z))^{-1} K(z)f, K(z)^* f \rangle | |\dd z| < \infty.
	\end{equation}

	We have
	\begin{equation}\label{Kf.norm}
		\|K(z)f\|^2 = \sum_{k \in \Z} |\langle K(z)f, \psi_k \rangle|^2 = \sum_{k \in \Z} \frac{|f_k|^2}{|z-k|} = \|K(z)^* f\|^2, 
	\end{equation}
	where we decomposed $f$ in the orthonormal basis $\{\psi_k\}$, i.e., $f = \sum_k f_k \psi_k$.
	Using that $\|B(z)\| \leq 1/2$ for $z \in \partial \Pi_n$ with $|n| \geq N_0$, we obtain that the l.h.s.~of \eqref{sum.conv} does not exceed
	\begin{equation}\label{sum.conv.2}
	\begin{aligned}
		\sum_{|n|>N_0}  \int_{\partial \Pi_n}  \frac{\|B(z)\|}{1-\|B(z)\|} \sum_{k \in \Z} \frac{|f_k|^2}{|z-k|} |\dd z|
		& \leq 
		2 \pi \sum_{k \in \Z} 
		\left(
		\sum_{|n| >N_0} \sup_{z \in \partial \Pi_n} \frac{\|B(z)\|}{|z -k|} 
		\right)
		|f_k|^2,
		\\
		& \leq 
		2 \pi \sup_{k \in \Z} \sum_{|n|>N_0} \sup_{z \in \partial \Pi_n} \frac{\|B(z)\|}{|z-k|} \|f\|^2.
	\end{aligned}	
	\end{equation}
Recall that by \eqref{KVK.est.N}
\begin{equation}
	\sup_{z \in \partial \Pi_n}\|B(z)\| 
	\leq 4 \sigma_{|n|}, \quad |n|>N_0,
\end{equation}
thus, similarly as in \eqref{sum.KVK.z.2}, 
\begin{equation}\label{KVK.cond.Dirac}
\sum_{|n|>N_0} \sup_{z \in \partial \Pi_n} \frac{\|B(z)\|}{|z-k|} 
\leq 
4 \sum_{|n|>N_0}
\sigma_{|n|} \sup_{z \in \partial \Pi_n} \frac1{|z-k|}
\leq
8 \sigma_{N_0} + 
8 \sum_{|n|>N_0, n \neq k}
\frac{ \sigma_{|n|} }{|n-k|}.
\end{equation}
To estimate the resulting sum, we employ \cite[Lemma~2]{Mityagin-2016-106}, namely, 
\begin{equation}\label{MS.lem.HT}
\sum_{m=1, k \neq m}^\infty \frac{1}{m^\gamma |m-k|} 
= 
\begin{cases}
\BigO \left(\frac{\log k}{k^\gamma}\right), & \text{if } 0 <\gamma \leq 1,
\\[1mm]
\BigO \left( \frac1 k \right), & \text{if } \gamma > 1,
\end{cases}
\quad k \to + \infty;
\end{equation}
and the easier estimate for $k \to - \infty$
\begin{equation}\label{MS.lem.HT.2}
\sum_{m=1, k \neq m}^\infty \frac{1}{m^\gamma |m-k|} 
	= 
	\begin{cases}
		\BigO \left( \frac{\log k}{k} \right), & \text{if } \gamma = 1,
		\\[1mm]
		\BigO \left( \frac1 {k^\gamma} \right), & \text{if } 0<\gamma \neq  1,
	\end{cases}
	\quad k \to  - \infty.
\end{equation}
Recalling \eqref{sig.dec}, we obtain that there exists $M \equiv M(\alpha,\eps)>0$ such that for all $|k|>1$
\begin{equation}
\sum_{|n|>N_0, n \neq k}
\frac{ \sigma_{|n|} }{|n-k|}  
\leq M \log |k|
\begin{cases}
|k|^{-\frac \alpha 2 + \eps}, & \text{if } 0< \alpha \leq \frac 12,
\\ 
|k|^{-\frac 18}, & \text{if } \alpha > \frac 12,
\end{cases}
\end{equation}

Returning to \eqref{KVK.cond.Dirac} and \eqref{sum.conv.2}, we obtain that the condition \eqref{sum.conv} is satisfied (hence also \eqref{RB.sum.crit}) and so the Riesz property of the system \eqref{S0Pk.syst} follows. 
\qed

{
	\bibliographystyle{halpha}
	\bibliography{references}
}

\end{document}